\theoremstyle{plain}
\newtheorem{theorem}{Theorem}
\newtheorem{lemma}[theorem]{Lemma}
\newtheorem{corollary}[theorem]{Corollary}
\title{Note on eigenvectors from eigenvalues}
\date{}
\author{Xiaomei Chen\\{\footnote{\today}}
}
\begin{document}
\maketitle

\vspace{-15mm}
\begin{abstract}
Denton, Parke, Tao and Zhang gave a new method which determines eigenvectors from eigenvalues for Hermitian matrices with distinct eigenvalues. In this short note, we extend the above result to general Hermitian matrices.
\end{abstract}
\vspace{5mm}

Let $A$ be a $n\times n$ Hermitian matrix with characteristic polynomial $f_A(\lambda)=(\lambda-\lambda_1(A))^{\mu_1}(\lambda-\lambda_2(A))^{\mu_2}\cdots (\lambda-\lambda_d(A))^{\mu_d}$, where $\mu_i$ is the algebraic multiplicity of $\lambda_i(A)$. Let $v_{i1},v_{i2},\dots,v_{i\mu_i}$ be a sequence of orthonormal eigenvectors of $A$ corresponding to $\lambda_i(A)$. Given $S\subset [n]$,we use $M_{S}$ to denote the submatrix obtained from $A$ by deleting rows and columns with indices belonging to $S$.

We first give a result about the determinant of blocks of unitary matrices.
\begin{lemma}
\label{lemma}
Let $P_{11}$ and $P_{22}$ be square matrices with order $r$ and $n-r$ respectively. If the block matrix
$$P=\left[\begin{matrix}
     P_{11} & P_{12} \\
     P_{21} & P_{22}
\end{matrix}\right]$$
is unitary, then we have $|\mathrm{det}(P_{11})|^2=|\mathrm{det}(P_{22})|^2$.
\end{lemma}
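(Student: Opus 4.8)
The plan is to avoid Schur complements, which would force invertibility assumptions on $P_{11}$ or $P_{22}$ that can fail in degenerate cases, and instead to work directly with the Gram-type relations coming from unitarity. Since $P$ is unitary we have both $P^*P = I_n$ and $PP^* = I_n$. Reading off the top-left $r\times r$ block of $P^*P = I_n$ gives
$$P_{11}^*P_{11} + P_{21}^*P_{21} = I_r,$$
while reading off the bottom-right $(n-r)\times(n-r)$ block of $PP^* = I_n$ gives
$$P_{21}P_{21}^* + P_{22}P_{22}^* = I_{n-r}.$$
Hence $P_{11}^*P_{11} = I_r - P_{21}^*P_{21}$ and $P_{22}P_{22}^* = I_{n-r} - P_{21}P_{21}^*$. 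Taking determinants of these two identities, and using $\det(P_{11}^*P_{11}) = |\det(P_{11})|^2$ together with $\det(P_{22}P_{22}^*) = |\det(P_{22})|^2$, reduces the lemma to showing
$$\det(I_r - P_{21}^*P_{21}) = \det(I_{n-r} - P_{21}P_{21}^*).$$

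The two sides here involve matrices of \emph{different} sizes, $r$ and $n-r$, both built from the single rectangular block $P_{21}$, which is $(n-r)\times r$. I would bridge them with Sylvester's determinant identity $\det(I_m + XY) = \det(I_k + YX)$, valid for $X$ of size $m\times k$ and $Y$ of size $k\times m$. Applying it with $X = -P_{21}^*$ and $Y = P_{21}$ immediately yields the displayed equality, completing the proof. Equivalently, and more self-containedly, I would observe that $P_{21}^*P_{21}$ and $P_{21}P_{21}^*$ share the same nonzero eigenvalues with identical multiplicities (the standard fact that $AB$ and $BA$ have equal nonzero spectra), while their remaining eigenvalues are all $0$; since each eigenvalue $\sigma^2$ contributes a factor $1 - \sigma^2$ to the respective determinant and the extra zero eigenvalues contribute only factors of $1$, the two determinants agree.

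The only genuine subtlety I anticipate is precisely this dimension mismatch between $P_{11}$ and $P_{22}$: a naive attempt to relate $\det(P_{11})$ and $\det(P_{22})$ directly (for instance via the block-inverse formula $(P^{-1})_{11} = (P_{11} - P_{12}P_{22}^{-1}P_{21})^{-1}$ combined with $P^{-1} = P^*$) runs into invertibility hypotheses on the blocks, which may fail in exactly the degenerate cases one wants to cover. The Gram-matrix route above sidesteps this entirely, since $I_r - P_{21}^*P_{21}$ and $I_{n-r} - P_{21}P_{21}^*$ are defined unconditionally and the spectral coincidence holds regardless of rank; I therefore expect the argument to go through verbatim for singular blocks, with the remaining work being only the bookkeeping of which blocks of $P^*P$ and $PP^*$ to extract.
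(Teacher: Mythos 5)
Your proof is correct and takes essentially the same route as the paper: both extract the relations $P_{11}^*P_{11}+P_{21}^*P_{21}=I_r$ and $P_{21}P_{21}^*+P_{22}P_{22}^*=I_{n-r}$ from unitarity and reduce the claim to $\det(I_r-P_{21}^*P_{21})=\det(I_{n-r}-P_{21}P_{21}^*)$. The only difference is that you justify this last equality explicitly via Sylvester's identity (equivalently, the equality of the nonzero spectra of $P_{21}^*P_{21}$ and $P_{21}P_{21}^*$), a step the paper leaves implicit in the phrase ``which gives the result.''
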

\begin{proof}
Since $P$ is unitary, we have $P^{*}P=PP^{*}=I_{n}$, which leads to
$$P_{11}^{*}P_{11}+P_{21}^{*}P_{21}=I_r,$$
$$P_{21}P_{21}^{*}+P_{22}P_{22}^{*}=I_{n-r}.$$
Thus we have
$$|\mathrm{det}(P_{11})|^2=\mathrm{det}(P_{11}^{*}P_{11})=\mathrm{det}(I_r-P_{21}^{*}P_{21}),$$
$$|\mathrm{det}(P_{22})|^2=\mathrm{det}(P_{22}P_{22}^{*})=\mathrm{det}(I_{n-r}-P_{21}P_{21}^{*}),$$
which gives the result.
\end{proof}

For $S\subset [n]$ with $|S|=\mu_i$, we use $[v_{i1},\dots,v_{i\mu_i}]_{S}$ to denote the $\mu_i\times\mu_i$ submatrix of $[v_{i1},\dots,v_{i\mu_i}]$ with $S$ the set of indices of its rows. Let $\lambda_{j}(M_S)$ denote the eigenvalues of $M_S$. Then our main result is as follows.
\begin{theorem}
\begin{equation}
\label{mainequ}
|\mathrm{det}([v_{i1},\dots,v_{i\mu_i}]_{S})|^2=\frac{\prod_{j=1}^{n-\mu_i} (\lambda_i(A)-\lambda_j(M_S))}{\prod_{j=1;j\neq i}^{d}(\lambda_i(A)-\lambda_j(A))^{\mu_j}}.
\end{equation}
\end{theorem}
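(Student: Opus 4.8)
The plan is to build the full unitary matrix of eigenvectors, invoke Lemma~\ref{lemma} to pass from the minor formed by the eigenvectors of $\lambda_i(A)$ to a complementary minor formed from the remaining eigenvectors, and then evaluate that complementary minor by restricting the spectral decomposition of $A-\lambda_i(A)I$ to the surviving coordinates.

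First I would assemble the complete $n\times n$ unitary matrix $U=[V_i\mid W]$, where $V_i=[v_{i1},\dots,v_{i\mu_i}]$ collects the eigenvectors for $\lambda_i(A)$ and $W$ collects orthonormal eigenvectors for all the other eigenvalues. Writing $T=[n]\setminus S$ and permuting rows so that the indices in $S$ come first and columns so that the $V_i$-columns come first (permutation matrices are unitary, and internal reorderings change a determinant only by a factor of $\pm1$, hence leave the moduli of the relevant minors unchanged), the matrix $U$ acquires the block form of Lemma~\ref{lemma} whose top-left $\mu_i\times\mu_i$ block is exactly $[v_{i1},\dots,v_{i\mu_i}]_S$ and whose bottom-right $(n-\mu_i)\times(n-\mu_i)$ block is $W_T$, the restriction of $W$ to the rows indexed by $T$. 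The lemma then yields
$$|\det([v_{i1},\dots,v_{i\mu_i}]_S)|^2=|\det(W_T)|^2,$$
so it remains to evaluate $|\det(W_T)|^2$.

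The key step is to recognize that restricting the spectral decomposition of $A-\lambda_i(A)I$ to the coordinates in $T$ exhibits $W_T$ as a factor of the principal submatrix $M_S-\lambda_i(A)I$. Indeed, since $A-\lambda_i(A)I=W\Lambda W^{*}$ with $\Lambda$ the diagonal matrix carrying the values $\lambda_j(A)-\lambda_i(A)$ (for $j\neq i$, repeated according to multiplicity) on the remaining eigenspaces, deleting the rows and columns indexed by $S$ gives
$$M_S-\lambda_i(A)I=W_T\,\Lambda\,W_T^{*}.$$
Taking determinants produces $\det(M_S-\lambda_i(A)I)=|\det(W_T)|^2\det(\Lambda)$, and solving for $|\det(W_T)|^2$ yields precisely the ratio in the statement.

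I expect the main difficulty to be bookkeeping rather than conceptual. One must check that $\det(\Lambda)=\prod_{j\neq i}(\lambda_j(A)-\lambda_i(A))^{\mu_j}$ is nonzero so that the division is legitimate, which holds exactly because $\lambda_j(A)\neq\lambda_i(A)$ for $j\neq i$, and one must track the sign $(-1)^{n-\mu_i}$ carefully: using $\sum_{j\neq i}\mu_j=n-\mu_i$, the factor produced by rewriting $\det(M_S-\lambda_i(A)I)=\prod_{j=1}^{n-\mu_i}(\lambda_j(M_S)-\lambda_i(A))$ as $\prod_{j=1}^{n-\mu_i}(\lambda_i(A)-\lambda_j(M_S))$ matches the one produced by $\det(\Lambda)$, and the two cancel. (An alternative route, bypassing Lemma~\ref{lemma}, would apply Jacobi's complementary-minor identity to the resolvent $(\lambda I-A)^{-1}$ to obtain $\det((\lambda I-A)^{-1}[S,S])=f_{M_S}(\lambda)/f_A(\lambda)$ and then extract the coefficient of $(\lambda-\lambda_i(A))^{-\mu_i}$; the approach above is shorter since it exploits the lemma already established.)
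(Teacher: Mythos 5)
Your proposal is correct and follows essentially the same route as the paper: assemble the full unitary eigenvector matrix, use Lemma~\ref{lemma} to equate $|\det([v_{i1},\dots,v_{i\mu_i}]_S)|^2$ with the complementary minor, and factor $M_S-\lambda_i(A)I$ through the restricted spectral decomposition $W_T\Lambda W_T^{*}$. The only (cosmetic) difference is that the paper first shifts $A$ by $\lambda_i(A)I$ so that $\lambda_i=0$, which makes the sign-cancellation bookkeeping you carry out explicitly unnecessary.
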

\begin{proof}
WLOG we take $i=1$ and $S=\{1,2,\dots,\mu_1\}$. Since both sides of eq. \ref{mainequ} remain the same after shifting $A$ by $\lambda_1I_n$, it is enough to prove the result for $\lambda_1=0$. Then eq. \ref{mainequ} becomes
\begin{equation}
\label{eequ}
|\mathrm{det}([v_{11},\dots,v_{1\mu_1}]_{S})|^2=\frac{\prod_{j=1}^{n-\mu_1} \lambda_j(M_S)}{\prod_{j=2}^{d}\lambda_j(A)^{\mu_j}}.
\end{equation}

Let
\begin{equation*}
\begin{aligned}
P=&[v_{11},\dots,v_{1\mu_1},v_{21},\dots,v_{2\mu_{2}},\dots,v_{d1},\dots,v_{d\mu_{d}}]\\
=&\left[\begin{matrix}
     [v_{11},\dots,v_{1\mu_1}]_{S} & P_{12} \\
     P_{21} & P_{22}
\end{matrix}\right]
\end{aligned}
\end{equation*}
and
\begin{equation*}
\begin{aligned}
D=&\mathrm{diag}(\underbrace{0,\dots,0}_{\mu_1},\underbrace{\lambda_2(A),\dots,\lambda_2(A)}_{\mu_2},\dots,\underbrace{\lambda_d(A),\dots,\lambda_d(A)}_{\mu_d})\\
=&\left[\begin{matrix}
     0_{\mu_1} & 0 \\
     0 & D_1
\end{matrix}\right]
\end{aligned}
\end{equation*}
 Then we have $A=PDP^{*}$, which implies that $M_{S}=P_{22}D_1P_{22}^{*}$. By Lemma \ref{lemma}, we know that $|\mathrm{det}([v_{11},\dots,v_{1\mu_1}]_{S})|^2=\mathrm{det}(P_{22}P_{22}^{*})$. Thus we have
 $$|\mathrm{det}([v_{11},\dots,v_{1\mu_1}]_{S})|^2=\frac{\mathrm{det}(M_{S})}{\mathrm{det} (D_1)}=\frac{\prod_{j=1}^{n-\mu_1} \lambda_j(M_S)}{\prod_{j=2}^{d}\lambda_j(A)^{\mu_j}}.$$
\end{proof}

Now we can obtain Lemma 2 in \cite{[Denton]} as a special case of Theorem 2.
\begin{corollary}\cite{[Denton],[Denton1]}
If $\mu_i=1$, then we have
\begin{equation*}
|v_{i1}(k)|^2=\frac{\prod_{j=1}^{n-1} (\lambda_i(A)-\lambda_j(M_k))}{\prod_{j=1;j\neq i}^{d}(\lambda_i(A)-\lambda_j(A))^{\mu_j}},
\end{equation*}
where $v_{i1}(k)$ denotes the $k^{th}$ element of $v_{i1}$.
\end{corollary}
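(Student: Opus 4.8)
The plan is to obtain this corollary as an immediate specialization of Theorem~2 to the case $\mu_i=1$. First I would observe that when the algebraic multiplicity $\mu_i$ equals $1$, the eigenspace associated with $\lambda_i(A)$ is one-dimensional, so there is a single orthonormal eigenvector $v_{i1}$, and the index set $S$ appearing in the theorem must satisfy $|S|=\mu_i=1$; hence $S=\{k\}$ for some $k\in[n]$. The claimed identity is then just eq.~\ref{mainequ} read off for this choice of $S$.

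Next I would identify each ingredient of eq.~\ref{mainequ} under this specialization. The block $[v_{i1},\dots,v_{i\mu_i}]_S$ collapses to the $1\times 1$ matrix whose single entry is the $k$-th coordinate $v_{i1}(k)$ of the eigenvector, so that $|\mathrm{det}([v_{i1}]_{\{k\}})|^2=|v_{i1}(k)|^2$, matching the left-hand side of the corollary. Likewise $M_S=M_k$ is the principal submatrix obtained by deleting row and column $k$, and the product range $n-\mu_i$ becomes $n-1$. The denominator $\prod_{j=1;j\neq i}^{d}(\lambda_i(A)-\lambda_j(A))^{\mu_j}$ is carried over unchanged, since it already runs over the remaining distinct eigenvalues with their full multiplicities and does not involve $\mu_i$.

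Finally I would substitute these identifications into Theorem~2, which turns the numerator $\prod_{j=1}^{n-\mu_i}(\lambda_i(A)-\lambda_j(M_S))$ into $\prod_{j=1}^{n-1}(\lambda_i(A)-\lambda_j(M_k))$ and yields exactly the stated formula. Since the argument is a direct translation of the main theorem, I do not expect a genuine obstacle; the only points warranting care are confirming that the single-entry determinant is indeed $v_{i1}(k)$ (so the absolute-value-squared is well defined regardless of the phase freedom in $v_{i1}$) and noting that the formula holds for every $k$, each value of $k$ corresponding to a different admissible singleton $S$ in Theorem~2.
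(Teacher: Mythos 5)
Your proposal is correct and matches the paper exactly: the corollary is obtained as the immediate specialization of Theorem~2 to $\mu_i=1$, $S=\{k\}$, where the $1\times 1$ block determinant reduces to the entry $v_{i1}(k)$ and $M_S=M_k$. The paper offers no further argument beyond this observation, so your careful identification of each ingredient is, if anything, more explicit than the original.
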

%% References
%%
%% Following citation commands can be used in the body text:
%% Usage of \cite is as follows:
%%   \cite{key}          ==>>  [#]
%%   \cite[chap. 2]{key} ==>>  [#, chap. 2]
%%   \citet{key}         ==>>  Author [#]

%% References with bibTeX database:

%\bibliographystyle{model1b-num-names}
%\bibliography{<your-bib-database>}

%% Authors are advised to submit their bibtex database files. They are
%% requested to list a bibtex style file in the manuscript if they do
%% not want to use model1b-num-names.bst.

%% References without bibTeX database:

\noindent{\emph{Address}: School of Mathematics and Computational Science, Hunan University of Science and Technology, Xiangtan 411201, China.}\\
\noindent{\emph{E-mail address}: xmchen@hnust.edu.cn}
\end{document}